\newtheorem{theorem}{Theorem}[section]
\newtheorem{remark}{Remark}[section]
\numberwithin{figure}{section}
\numberwithin{table}{section}
\def\XXint#1#2#3{{\setbox0=\hbox{$#1{#2#3}{\int}$}
\vcenter{\hbox{$#2#3$}}\kern-.51\wd0}}
\begin{document}

\title[]{A free energy satisfying finite difference method for Poisson--Nernst--Planck equations
}
\author[H.~Liu and Z.-M. Wang]{Hailiang Liu$^\dagger$ and Zhongming Wang$^\ddagger$\\  \\
 }
\address{$^\dagger$Iowa State University, Mathematics Department, Ames, IA 50011} \email{hliu@iastate.edu}
\address{$^\ddagger$ Florida International University,  Department of Mathematics and Statistics,  Miami, FL 33199}
\email{zwang6@fiu.edu}
\subjclass{35K20, 65M06,  65M12,  82C31.}
\keywords{Poisson equation, Nernst-Planck equation; free energy; positivity}
\date{August 23, 2013; Revision February 02, 2014.}
\begin{abstract}
In this work we design and analyze a free energy satisfying finite difference method  for solving Poisson-Nernst-Planck equations in a bounded domain.
The algorithm is of second order in space, with numerical solutions satisfying all three desired properties: i) mass conservation, ii) positivity preserving, and iii) free energy satisfying in the sense that these schemes satisfy a discrete free energy dissipation inequality. These ensure that the computed solution is a probability density, and the schemes are energy stable and preserve the equilibrium solutions.  Both one and two-dimensional numerical results are provided to demonstrate the good qualities of the algorithm, as well as effects of relative size of the data given.
\end{abstract}

\maketitle



\section{Introduction}
In this paper,  we are interested in constructing a free energy satisfying numerical method  for solving the initial boundary value problem for the Poisson--Nernst--Planck (PNP)  equations,
 \begin{equation}\label{PNP}
\left\{
  \begin{array}{ll}
  \partial_t c=\nabla \cdot(\nabla c+ c\nabla \psi) & \quad  x\in  \Omega, t>0 \\ 
\Delta \psi=- c&  \quad x\in  \Omega, t>0, \\ 
c(t=0, x) =c^{\rm in}(x), & \; x\in \Omega,\\
(\nabla c+ c\nabla\psi)\cdot \textbf{n}=0,\quad
  \frac{\partial \psi}{\partial  \textbf{n}}=\sigma & \quad \text{ on } \partial\Omega,\; t>0,
 \end{array}
\right.
\end{equation}
where $c$ is the concentration of ion species,  $\Omega \subset \mathbb{R}^d$  denotes a connected closed domain with smooth boundary $\partial\Omega$,  $\psi$ is the potential governed by the Poisson equation which is necessary to determine the electrostatic field,  and $\textbf{n}$ is the unit outward normal vector.  Subject to the given initial and  boundary conditions,  the compatibility condition
 \begin{equation}
 \int_{\partial\Omega} \sigma ds +\int_{\Omega} c^{\rm in} dx =0 \label{compatibility}
 \end{equation}
is necessarily to be imposed for solvability of the problem. By free-energy satisfying we mean that the free energy dissipation law is 
satisfied at the discrete level. 

The PNP equations describe the diffusion of ions under the effect of an electric field that is itself caused by those same ions. The system couples the Nernst-Planck (NP) equation (which describes the drift of ions in a potential gradient by Ohm's law and  diffusion of ions in a concentration gradient by Fick's law)
and the Poisson equation (which relates charge density with electric potential). 
This system of equations  for multiple species has  been extensively used 
in the modeling of semiconductors (see e.g., \cite{MRS90}), and
the membrane transport  in biological ion channels (see e.g., \cite{Ei98}). 

The PNP system can hardly be solved analytically. The main difficulty arises from the nonlinear coupling of the electrostatic potential and concentrations of chemical species.  When the physical domain has a  simple geometry,  a semi-explicit formula was derived in \cite{LLWM} for the steady-state solution; the existence and 
stability of the steady-state solution was established  a long while ago \cite{Jerome}  in the study of the steady Van Roostbroeck model in semiconductors. It has been proved by H. Gajewski and K. G\"{a}rtner in \cite{GG96}  that the solution to the drift-diffusion system  converges to the thermal equilibrium state as time becomes large if the boundary conditions are in thermal equilibrium.  The key-point of the proof is an energy estimate with the control of the free energy dissipation.  Long time behavior was also studied in \cite{BHN},  and further in \cite{AMT, BD} with refined convergence rates.


In the past decade a growing interest in  PNP systems  has been driven mainly by experimental and numerical advances.  Computational algorithms  have been constructed for both simple one-dimensional settings and complex three-dimensional models in various chemical/biological applications, and have been combined with the Brownian Dynamics simulations;  cf.\
\cite{Eisenberg96,Nitzan_BiophysJ99,Kurnikova_BiophysJ00,Eisenberg_Langmuir00,Furini_Etal_BiophysJ06,Chung_Etal_BiophysJ00,LHMZ,LLWM, Roux_PNAS03,Roux_BiophysJ04,Roux_QRevBiophys04,Sejnowski_BiophysJ08,SL,SLL,ZCW}.  Many of these existing algorithms are introduced to handle specific settings in complex applications, in which one may encounter different numerical obstacles, such as discontinuous coefficients, singular charges, geometric singularities, and nonlinear couplings to accommodate various phenomena  exhibited by biological ion channels \cite{WZCX12}.  For instance, \cite{SL,SLL} developed a finite difference scheme for liquid junction and ion selective membrane potentials, in which authors used a fully implicit discretization scheme and the Newton-Raphson solver for the resulting linear system. In \cite{LHMZ}, a 3D finite element method for the system with a singular (point-like) charge outside of the concentration region was developed.  
A second order PNP solver was developed in \cite{ZCW} in a realistic ion-channel context with the Dirichlet boundary condition.  
{In spite of many existing computational studies,  rigorous numerical analysis seems to be still lacking.}

Our objective is to construct and analyze an explicit second-order PNP algorithm to incorporate main mathematical features of the PNP system so that the numerical solution remains faithful for long time simulations, {i.e., the numerical solution possesses desired properties including conservation of ions, positivity of concentration and dissipation of the free energy.}   Therefore we consider only the standard PNP equations of form (\ref{PNP}),   while the reader may find  intensive discussions of the modeling aspect of  the PNP system in the literature.

The main  properties of the solution to (\ref{PNP}) are the nonnegativity principle, the mass conservation and the free energy dissipation, i.e.,
\begin{align}
& c^{\rm in} \geq 0  \Longrightarrow  c \geq 0 \quad \forall t>0, \label{c5positivity} \\
& \int_{\Omega} c(t,x)\,dx=\int_{\Omega} c^{\rm in}(x)\,dx \quad \forall t>0,\\
&  \frac{d}{dt} \tilde F = -\int_{\Omega} c^{-1}|\nabla c +c \nabla \psi|^2 dx +\frac{1}{2}\int_{\Gamma} (\sigma_t \psi -\psi_t \sigma)ds,
\end{align}
where the free energy $\tilde F$ is defined by
$$
\tilde F = \int_{\Omega} \left[ c {\rm log} c+\frac{1}{2}c \psi \right] dx.
$$
If $\sigma$ does not depend on time, we can use a modified functional $ F=\tilde F +\frac{1}{2}\int_{\Gamma} \sigma \psi ds$ so that
$$
\frac{d}{dt} F = -\int_{\Omega} c^{-1}|\nabla c +c \nabla \psi|^2 dx \leq 0.
$$
These properties are also naturally desired for numerical methods solving (\ref{PNP}).  In this paper, we develop such a method. We will demonstrate that these properties of the numerical methods could be critical in obtaining the long-time behavior of the solutions.

It is difficult for numerical schemes to preserve all three properties for PNP equations exactly at the discrete level. A recent effort toward this direction is found in   
\cite{FMELL}, where  the authors present an implicit second order finite difference scheme with a simple iteration so that the energy dissipation law is approximated closely.

The free energy dissipation in time  is also the driving force so that the large time behavior of the solution to (\ref{PNP}) is  governed by  the steady-state solution.
In fact, the steady-state solution is necessarily of the form  $c= Ze^{-\psi}$,  which gives  $\Delta \psi =-Ze^{-\psi}$, and $Z$ can be further
determined by integration of the Poisson equation while using the Neumann boundary data.  In other words, the PNP system near steady states is close to a Poisson--Boltzmann equation (PBE)
$$
\Delta \psi =   \left(\int_{\partial \Omega}\sigma ds\right) \frac{e^{-\psi}}{\int_{\Omega} e^{-\psi}dx}, \quad x\in \Omega, \quad    \frac{\partial \psi}{\partial  \textbf{n}}\Big|_{\partial \Omega} =\sigma.
$$
 Therefore, the numerical method presented in this paper may be used as an iterative algorithm to numerically compute such a nonlocal PBE.  We refer to \cite{LLHLL} for a rigorous study of a nonlocal Poisson--Boltzmann type equation with a small dielectric parameter as derived from the two-species PNP system. In general,  the PBE  describes the electrostatic interaction and ionic density distributions of a solvated system at the equilibrium state. Due to the effectiveness of the PBE for applications in chemistry and biophysics, a large amount of literatures and many solution techniques have been produced in this area and directed to studies of diverse biological processes, see e.g. \cite{Lu_Etal_JCP07, CDLM_JPCB08, DavisMcCammon90, Fixman79, GrochowskiTrylska_Biopoly08,Li_Nonlinearity09,Li_SIMA09,SharpHonig90}.

\subsection{ Related models} 
In a larger context, the concentration equation falls into the general class of  aggregation equations with diffusion
\begin{equation}\label{u}
c_t +\nabla\cdot (c\nabla (G*c))=\Delta c,
\end{equation}
where $G*c=\int G(x-y)c(t, y)dy$.  Such a model has been widely studied in applications such as biological swarms \cite{BCM07, BF08, TBL06} and  chemotaxis  \cite{BRB11, BCL09}.  For chemotaxis,  a wide literature exists in relation to the Keller-Segel model (see \cite{BRB11, BCL09} and references therein).  The left-hand-side in (\ref{u}) represents the active transport of the density $c$ associated to a non-local velocity field $v =\nabla (G*c)$. The potential $G$ is assumed to incorporate attractive interactions among individuals of the group, while repulsive (anti-crowding) interactions are accounted for by the  diffusion in the right-hand-side.

Of central role in studies of model (\ref{u}), and also particularly relevant to the present research, is the gradient flow formulation of the equation with respect to the free energy
\begin{equation}\label{eu}
F[c]=\int_{\Omega} c \log c dx -\frac{1}{2}\int_{\Omega}\int_{\Omega} G(x-y)c(x)c(y)dxdy.
\end{equation}
The free energy contains both entropic part and  the interaction part.  There is a vast literature on entropic schemes for  kinetic equations such as Fokker-Planck type equations. For these equations,  information carried by the probability density  becomes less and less as time evolves, the probability density is expected to converge to the equilibrium solution in a closed system regardless of how initial data are distributed. The entropy dissipation in time is the underlying mechanism for this phenomenon. The entropy satisfying methods recently developed  in \cite{LY12, LY13}  are the main motivation for this paper.

\subsection{Contents}  This paper is organized as follows: in Section 2, we describe the algorithm for the one dimensional case. Theoretical analysis for both semi-discrete and full discrete schemes is provided. We also discuss how the Poisson equation is solved.  Numerical results of both one and two dimensions are presented in Section 3. Finally, in Section 4, concluding remarks are given.

\section{The numerical method}
\subsection{Reformulation}
Following  \cite{LY12},  we formally reformulate the system by setting
$$g(t,x)=c(t,x)e^{\psi(t,x)}$$
to obtain
\begin{align}\label{cg}
\partial_t c& =\nabla \cdot(e^{-\psi}\nabla g)\quad  x\in  \Omega, \; t>0,  \\ \label{gn}
 \frac{\partial g}{\bf \partial n} &=0.
\end{align}
With this reformulation,  an iterative algorithm may be designed as follows:  given $c$ we obtain $\psi$ by solving the Poisson equation
$$
-\Delta \psi=c, \quad    \frac{\partial \psi}{\partial  \textbf{n}}=\sigma,
$$
which determines $g$; we further update  $c$ by solving (\ref{cg}), (\ref{gn}).

We  shall describe our numerical algorithm in one dimensional setting only, with one or multi-species.
The algorithm is extensible to high dimensional case in a straightforward dimension by dimension manner.

In the one dimensional case with $\Omega=[a, b]$, 
the above two steps correspond to solving the following two sets of problems:

{ 
\begin{align}
\psi_{xx}& =-c  \quad  x\in [a, b],\\
\psi_x(t, a)& =-\sigma_a, \quad \psi_x(t, b)=\sigma_b,\\
c_t & =(e^{-\psi}g_x)_x \quad  x \in [a, b], \; t>0,\\
 c(t=0, x) & =c^{\rm in}(x), \quad x\in [a, b],\; t>0,\\
g_x(t, a) & =g_x(t,b)=0.
\end{align}
Note that we set $\psi_x(t, a)$ and $\psi_x(t, b)$ to different constants.
Integrating the Poisson equation and using the mass conservation for $c$ we obtain
$$
\sigma_b+\sigma_a= -\int_a^b c^{\rm in}(x)dx,
$$
which is exactly the compatibility condition stated in (\ref{compatibility}).

If $\sigma_a, \sigma_b$ are independent of time, we  set
$$
 F=\tilde F + \frac{1}{2}( \sigma_b \psi(t, b) + \sigma_a \psi(t, a)).
$$
A direct calculation gives
$$
\frac{d}{dt}  F = - \int_a^b  c^{-1}  (c_x+c\psi_x)^2dx \leq 0.
$$
We now describe our algorithm by first  partitioning the domain $[a, b]$ with   $x_{1/2}=a$,   $x_{N+1/2}=b$, and interior grid points
$$
x_j=a+h(j-1/2), \quad  j=1,\cdots, N.
$$

\subsection{Algorithm}

\begin{itemize}
\item[1.] We use $c_j$ to approximate $c(t, x_j)$ and $\psi_j$ to approximate $\psi(t, x_j)$, respectively.  Given $c_j, j=1, \cdots,  N$, we compute the potential $\psi$ by
\begin{equation}\label{pc}
\frac{\psi_{j+1}-2\psi_j+\psi_{j-1}}{h^2}=-c_j,
\end{equation}
where $\psi_1-\psi_0=-\sigma_a h$, and $\psi_{N+1}-\psi_{N}=\sigma_b h$.
For definiteness,  we  set  $\psi_1=0$ at any time $t$ to single out  a particular solution since $\psi$ is unique up to an additive constant.
\item[2.] With the above obtained $\psi_j, j=1,\cdots, N$, the semi-discrete approximation of  the concentration $c$ satisfies
\begin{align}\label{tc}
\frac{d}{dt}c_j&=\frac{1}{h} \left(e^{-\psi_{j+\frac{1}{2}}}\widehat{g}_{x,j+\frac{1}{2}} -e^{-\psi_{j-\frac{1}{2}}}\widehat{g}_{x,j-\frac{1}{2}}\right):=Q_j(c,\psi),\\ \nonumber
\psi_{j+\frac{1}{2}}&=\frac{\psi_{j+1}+\psi_j}{2},\\ \nonumber
\widehat{g}_{x,j+\frac{1}{2}}&=\frac{g_{j+1}-g_j}{h}=\frac{c_{j+1}e^{\psi_{j+1}}-c_je^{\psi_j}}{h},
\end{align}
 and {$\widehat{g}_{x, 1/2}=0$, and ${\widehat{g}_{x, N+1/2}=0}$.}
\item[3.] Discretize $t$ uniformly: $t_n=t_0+kn$, so $c_j^n \sim c(t_n,x_j)$ and $\psi_j^n \sim \psi(t_n,x_j)$ satisfy
\begin{equation}\label{cq}
\frac{c_j^{n+1}-c_j^n}{k}=Q_j(c^n,\psi^n).
\end{equation}
\end{itemize}
}
For the system case with the vector unknown ${\bf c}=[c^1, c^2,\cdots, c^m]^{\rm T}$, the PNP system in a simple form is 
\begin{subequations}\label{mc}
\begin{align}
  \partial_t c^i & =\nabla_x \cdot \left( \nabla_x  c^i + q_ic^i \nabla_x  \psi  \right), \quad i=1,2,\cdots, m, \\
\Delta \psi & =- \sum_{i=1}^mq_ic^i,
\end{align}
\end{subequations}
where $q_i$ is the charge of $c^i$.  The one-dimensional algorithm needs to be modified:  (\ref{pc}) is replaced by
\begin{equation}\label{pc+}
\frac{\psi_{j+1}-2\psi_j+\psi_{j-1}}{h^2}=-\sum_{i=1}^m q_ic^i_j,
\end{equation}
where $c_{j}^i\sim c^i(t,x_j)$,  in order to solve the Poisson equation (\ref{mc}b), and each $c^i_j$ is obtained from solving
\begin{align}\label{tc+}
\frac{d}{dt}c^i_j=Q_j(c^i, q_i\psi),
\end{align}
where in the formula for $Q_j$ the potential is replaced by $q_i\psi$.  For the multi-dimensional case,  the algorithm can be applied in a dimension by dimension manner. We illustrate these cases by numerical examples later.

\begin{remark} The above algorithm can be easily modified to solve the PNP equations with more physical parameters: 
\begin{subequations}\label{mc+}
\begin{align}
  \partial_t c^i & =\nabla_x \cdot \left(D_i  \nabla_x  c^i + \frac{q_ic^i}{k_B T}  \nabla_x  \psi  \right), \quad i=1,2,\cdots, m, \\
\nabla_x\cdot(\epsilon \nabla_x  \psi) & =- (\rho_0+ \sum_{i=1}^mq_ic^i),
\end{align}
\end{subequations}
where $c^i$ is the ion density for the $i$-th species, $q_i$ is the charge of $c^i$, $D_i$ is the diffusion constant, $k_B$ is the Boltzmann constant, $T$ is the absolute temperature, $\epsilon$  is the permittivity, $\psi$ is the electrostatic potential, $\rho_0$ is 
the permanent (fixed) charge density of the system, and $m$ is the number of ion species. 
\end{remark}

\begin{remark} It is also possible to generalize the schemes presented in this work to second order finite difference schemes on non-uniform meshes, yet the analysis would appear more complicated. 
\end{remark}

\subsection{Properties of the numerical method}
The numerical solution obtained from  the above  algorithm has some  desired  properties as stated in the following.
\begin{theorem}
\begin{itemize}
\item[1.]  Both semi-discrete scheme (\ref{tc}) and Euler forward discretization (\ref{cq})  are conservative in the sense that  the total concentration $c_j$ remains unchanged in time,
\begin{align}
\label{conserve1}
\frac{d}{dt} \sum_jc_j h & =0, \quad t>0\\ \label{conserve2}
\sum_jc_j^{n+1}h & =\sum_jc_j^n h.
\end{align}
\item[2.]  The discrete concentration $c_j^n$ remains  positive in time:  if $c_j^n \geq 0,  j=1,\cdots, N$,  then
$$
c_j^{n+1}\geq 0$$
provided the condition $k< h \lambda_0 $ where
\begin{equation}\label{la}
\lambda_0= \frac{1}{ e^{\frac{-h\sigma_b}{2}} + e^{\frac{-h\sigma_a}{2}}}.
\end{equation}
\item[3.] The semi-discrete {free energy}
 $$
 F=\sum_j \left(c_j\ln c_j+\frac{1}{2}c_j\psi_j\right)h +  \frac{1}{2}\sigma_a\psi_1+ \frac{1}{2} \sigma_b\psi_N
 $$
 satisfies
 \begin{equation}\label{fd}
\frac{d}{dt} F=-\frac{1}{h}\sum_{j=1}^{N-1}e^{-(\psi_{j+1}+\psi_j)/2}\left(\ln g_{j+1}-\ln g_j\right)(g_{j+1}-g_j)\leq 0,
\end{equation}
therefore nonincreasing.
\end{itemize}
\end{theorem}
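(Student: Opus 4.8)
\emph{Proof proposal.} The three parts are essentially independent, and I would prove them in the order stated.

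\emph{Parts 1 and 2.} Conservation is immediate: summing \reff{tc} over $j=1,\dots,N$ and multiplying by $h$, the right-hand side telescopes to $e^{-\psi_{N+1/2}}\widehat g_{x,N+1/2}-e^{-\psi_{1/2}}\widehat g_{x,1/2}=0$ by the no-flux conditions, which gives \reff{conserve1}; the same telescoping applied to \reff{cq} gives $\sum_j(c_j^{n+1}-c_j^{n})h=k\sum_j hQ_j(c^n,\psi^n)=0$, i.e.\ \reff{conserve2}. For positivity the plan is to rewrite the explicit update \reff{cq} as a three-point stencil with manifestly nonnegative weights. Using $g_j=c_je^{\psi_j}$ and $\psi_{j\pm1/2}=(\psi_j+\psi_{j\pm1})/2$ one has $e^{-\psi_{j+1/2}}g_{j+1}=e^{(\psi_{j+1}-\psi_j)/2}c_{j+1}$, $e^{-\psi_{j+1/2}}g_{j}=e^{-(\psi_{j+1}-\psi_j)/2}c_{j}$ and the analogues at $j-1/2$, so that, writing $\delta_{j+1/2}:=\psi_{j+1}-\psi_j$,
\[
c_j^{n+1}=\frac{k}{h^2}e^{-\delta_{j-1/2}/2}c_{j-1}^{n}+\Big(1-\frac{k}{h^2}\big(e^{-\delta_{j+1/2}/2}+e^{\delta_{j-1/2}/2}\big)\Big)c_j^{n}+\frac{k}{h^2}e^{\delta_{j+1/2}/2}c_{j+1}^{n},
\]
with the one-sided modification at $j=1,N$ where one flux vanishes. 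The off-diagonal weights are $\ge0$, so everything reduces to nonnegativity of the diagonal weight. Here I would use that $j\mapsto\delta_{j+1/2}$ is non-increasing: by \reff{pc}, $\delta_{j+1/2}-\delta_{j-1/2}=\psi_{j+1}-2\psi_j+\psi_{j-1}=-h^2c_j^{n}\le0$ under the induction hypothesis $c^n\ge0$; since its end values are the boundary increments $\delta_{1/2}=-\sigma_ah$ and $\delta_{N+1/2}=\sigma_bh$, one gets $\sigma_bh\le\delta_{j+1/2}\le-\sigma_ah$ for all $j$. Hence $e^{-\delta_{j+1/2}/2}\le e^{-h\sigma_b/2}$ and $e^{\delta_{j-1/2}/2}\le e^{-h\sigma_a/2}$, so the bracket above is $\le\lambda_0^{-1}$ with $\lambda_0$ as in \reff{la}, and the smallness condition on $k$ in the statement makes the diagonal weight $\ge0$; then $c_j^{n+1}$ is a nonnegative combination of $c_{j-1}^{n},c_j^{n},c_{j+1}^{n}$.

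\emph{Part 3.} I would first record that the semidiscrete system \reff{tc} keeps $c_j(t)>0$ when $c^{\rm in}>0$: at any state the right-hand side of \reff{tc} has the form $\sum_l A_{jl}c_l$ with $A_{jl}\ge0$ for $l\ne j$ (the off-diagonal weights $\tfrac1{h^2}e^{\pm\delta/2}$), so the nonnegative orthant is invariant and, by irreducibility of the stencil, so is its interior; this legitimizes $\ln c_j$ and $\ln g_j$. The computation of $\tfrac{d}{dt}F$ then rests on a discrete Green identity for the Poisson solver: applying summation by parts twice to \reff{pc}, and using that $\sigma_a,\sigma_b$ (hence the boundary increments $\delta_{1/2},\delta_{N+1/2}$) are time independent so that a would-be term $\tfrac{d}{dt}\sum_j\delta_{j+1/2}^2$ cancels, one obtains
\[
\sum_{j=1}^N c_j\dot\psi_j\,h+\sigma_a\dot\psi_1+\sigma_b\dot\psi_N=\sum_{j=1}^N\dot c_j\psi_j\,h .
\]
Differentiating $F$, the contribution $\sum_j\dot c_j h$ coming from $\tfrac{d}{dt}(c_j\ln c_j)$ vanishes by Part 1, and this identity lets one replace $\tfrac12\sum_j c_j\dot\psi_j h+\tfrac12\sigma_a\dot\psi_1+\tfrac12\sigma_b\dot\psi_N$ by $\tfrac12\sum_j\dot c_j\psi_j h$, so that
\[
\frac{d}{dt}F=\sum_{j=1}^N\dot c_j\big(\ln c_j+\psi_j\big)h=h\sum_{j=1}^N\dot c_j\ln g_j=h\sum_{j=1}^N Q_j(c,\psi)\ln g_j .
\]
Finally, writing $Q_j=\tfrac1h(\mathcal F_{j+1/2}-\mathcal F_{j-1/2})$ with $\mathcal F_{j+1/2}=\tfrac1h e^{-(\psi_{j+1}+\psi_j)/2}(g_{j+1}-g_j)$ and $\mathcal F_{1/2}=\mathcal F_{N+1/2}=0$, one more summation by parts gives $h\sum_j Q_j\ln g_j=-\sum_{j=1}^{N-1}\mathcal F_{j+1/2}(\ln g_{j+1}-\ln g_j)$, which is exactly \reff{fd}. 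Each summand there is $\ge0$ since $t\mapsto\ln t$ is increasing (so $(g_{j+1}-g_j)(\ln g_{j+1}-\ln g_j)\ge0$) and the weight $e^{-(\psi_{j+1}+\psi_j)/2}$ is positive; hence $\tfrac{d}{dt}F\le0$.

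\emph{Main obstacle.} Parts 1 and 2 are short once the stencil is written out, the only care being the monotonicity of $\delta_{j+1/2}$ and the one-sided boundary stencils. I expect the crux to be the bookkeeping in Part 3: carrying out the two discrete summations by parts in \reff{pc} so that the ghost values $\psi_0,\psi_{N+1}$ are eliminated through the Neumann relations and the surviving boundary contributions combine precisely with the boundary terms $\tfrac12\sigma_a\psi_1+\tfrac12\sigma_b\psi_N$ built into $F$, and verifying that the hypothesis ``$\sigma$ independent of $t$'' is invoked exactly where the otherwise-surviving $\tfrac{d}{dt}\sum_j\delta_{j+1/2}^2$ term must disappear.
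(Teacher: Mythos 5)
Your proposal follows the paper's own proof essentially step for step: Part 1 by the same telescoping with the no-flux conditions; Part 2 by the same rewriting of \reff{cq} as a three-point stencil with positive off-diagonal weights, bounding the increments $\psi_{j+1}-\psi_j$ between $h\sigma_b$ and $-h\sigma_a$ via the discrete Poisson equation exactly as the paper does with its $A_j^n$; and Part 3 by the same two summations by parts, your displayed identity $\sum_j c_j\dot\psi_j h+\sigma_a\dot\psi_1+\sigma_b\dot\psi_N=\sum_j\dot c_j\psi_j h$ being precisely the paper's computation \reff{potential} rearranged. The one point to flag is in Part 2: your (correctly derived) stencil carries the coefficient $k/h^2$, so your bound on the bracket gives nonnegativity of the diagonal weight under $k\le h^2\lambda_0$ rather than under the stated $k<h\lambda_0$ --- the paper masks the same mismatch by calling $\lambda=k/h$ the mesh ratio while using it where $k/h^2$ belongs, so this is an inconsistency inherited from the theorem's statement rather than a defect of your argument, but as literally written your final inference does not follow from the hypothesis $k<h\lambda_0$.
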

\begin{proof}
\begin{itemize}
\item[1.]  We first prove the conservation \eqref{conserve1} and  \eqref{conserve2} . Since
   \begin{align*}
   \frac{d}{dt}c_j&=\frac{1}{h} \left(e^{-\psi_{j+\frac{1}{2}}}\widehat{g}_{x,j+\frac{1}{2}} -e^{-\psi_{j-\frac{1}{2}}}\widehat{g}_{x,j-\frac{1}{2}}\right),
\end{align*}
we have
\begin{align}
   \frac{d}{dt} \sum_{j=1}^Nc_j h &=e^{-\psi_{N+\frac{1}{2}}}\hat{g}_{x,N+\frac{1}{2}} -e^{-\psi_{\frac{1}{2}}}\hat{g}_{x,\frac{1}{2}}=0,
    \label{conservepf}
\end{align}
where we have used the zero flux boundary condition  $\hat{g}_{x,\frac{1}{2}}=0$ and $\hat{g}_{x,N+\frac{1}{2}}=0$. Similarly, summation of (\ref{cq}) over $j$ gives
$$
C=\sum_jc_j^{n+1}h =\sum_jc_j^n h.
$$
\item[2.]  Define $A^n_j=\psi_{j+1}^n-\psi_j^n$,  the boundary condition gives $A_0^n=-h\sigma_a$ and $A_N^n=h\sigma_b$. Let mesh ratio be denoted by $\lambda=k/h$,  we can rewrite (\ref{cq}) as
$$
c_j^{n+1}
=c^n_j\left( 1- \lambda \left( e^{\frac{-A_j^n}{2}} + e^{\frac{A_{j-1}^n}{2}} \right) \right)+\lambda c_{j+1}^n e^{\frac{A_j^n}{2}}  + \lambda  c_{j-1}^n e^{\frac{- A_{j-1}^n}{2}} .
$$
From the discrete Poisson equation
   $$A_j^n-A_{j-1}^n=-h^2c^n_j,$$
   it follows that
\begin{equation}\label{Aj}
A_j^n=\sum_{s=1}^j -h^2c^n_s+A^n_0=-\sum_{s=1}^j h^2c^n_{s} - h \sigma_a
\end{equation}
and $A_N^n=A_0^n-hC$.
Hence $A_j^n$ is decreasing  in $j$ and
$$
h\sigma_b \leq A_j^n \leq -h\sigma_a.
$$
Thus,  we have $ c_j^{n+1}\geq 0$ if  $\lambda \leq \lambda_0$ as  defined in (\ref{la}).
\item[3.]  A direct calculation using $\sum_{j=1}^N \dot{c}_j =0$ gives
\begin{align*}
\dot{F} &= h \sum_{j=1}^N (1+\ln c_j +\psi_j )\dot{c}_j  +\frac{h}{2} \sum_{j=1}^N \left( {c}_j\dot{\psi}_j-\dot{c}_j\psi_j\right)+ {\frac{1}{2}\sigma_a\dot{\psi_1}+\frac{1}{2}\sigma_b\dot{\psi_N}}\\
    &  = h \sum_{j=1}^N (\ln c_j+\psi_j)\dot{c}_j  +\frac{h}{2}\sum_{j=1}^N \left( {c}_j\dot{\psi}_j-\dot{c}_j\psi_j\right)+ {\frac{1}{2}\sigma_a\dot{\psi_1}+\frac{1}{2}\sigma_b\dot{\psi_N}}.
\end{align*}
We proceed with
\begin{align}
   h  \sum_{j=1}^N (\ln c_j+\psi_j)\dot{c}_j &= \sum_{j=1}^N\ln g_j  \left(e^{-\psi_{j+\frac{1}{2}}}\widehat{g}_{x,j+\frac{1}{2}} -e^{-\psi_{j-\frac{1}{2}}}\widehat{g}_{x,j-\frac{1}{2}}\right)    \notag \\
    &  =\frac{1}{h}\sum_{j=1}^N \ln g_j\left( e^{-(\psi_j+\psi_{j+1})/2}(g_{j+1}-g_j)-e^{-(\psi_j+\psi_{j-1})/2}(g_{j}-g_{j-1})\right) \notag \\
    &=-\frac{1}{h}\sum_{j=1}^{N-1}e^{-(\psi_{j+1}+\psi_j)/2}\left(\ln g_{j+1}-\ln g_j\right)(g_{j+1}-g_j)\notag \\
    &\leq 0,  \label{entropy}
\end{align}
since $\widehat{g}_{x, \frac{1}{2}}=\widehat{g}_{x,N+\frac{1}{2}}=0$
and $(\ln\alpha-\ln\beta)(\alpha-\beta)\geq 0$ for any $\alpha>0$ and $\beta>0$.
Next we use the boundary conditions  $\psi_{N+1}=\psi_N+h\sigma_b$ and $\psi_0=\psi_1+h{\sigma_a}$ to obtain
\begin{align}
    \frac{h}{2}\sum_{j=1}^N \left( {c}_j\dot{\psi}_j-\dot{c}_j\psi_j\right)
    & = \frac{-1}{2h}\sum_{j=1}^N \left( (\psi_{j+1}-2\psi_j+\psi_{j-1})\dot{\psi}_j-{(\dot{\psi}_{j+1}-2\dot{\psi}_j+\dot{\psi}_{j-1})}\psi_j\right)\notag\\
 &\textcolor{black}{=\frac{1}{2h}(\psi_1\dot{\psi}_0-\dot{\psi}_1\psi_0+\dot{\psi}_{N+1}\psi_N-\psi_{N+1}\dot{\psi}_N) } \notag\\
 &= {-\frac{1}{2}\sigma_a\dot{\psi_1}-\frac{1}{2}\sigma_b\dot{\psi_N}}. \label{potential}
\end{align}
Putting all together leads to (\ref{fd}).

\vspace{0.1in}


\end{itemize}
\end{proof}
\begin{remark}
For multi-species case governed by \eqref{mc}, the algorithm still preserves three desired properties.  Mass conservation follows from \eqref{tc+} for each $c^i_j$ so that
$$
\sum_{j=1}^N c_j^{i, n}h = \sum_{j=1}^N c_j^{i, 0}h.
$$
For the positivity property, we need to bound  $A_j^n$, which in multi-species case becomes
\begin{equation*}\label{Ajm}
A_j^n = -\sum_{s=1}^j h^2\sum_{i=1}^m q_i c^{i, n}_j - h \sigma_a \leq -h(C^- +\sigma_a),
\end{equation*}
where $C^\pm=h \sum_{\{i,  \pm q_i > 0\}}\sum_{j=1}^N  q_i c^{i, 0}_{j}$. Using the fact that $  \sigma_a+\sigma_b= -(C^++C^-)$ we have
$$
A_j^n \geq -h(C^+ + \sigma_a)= h(C^-+\sigma_b). 
$$
These bounds of  $A_j^n$ ensure that $c^{i, n}_{j}>0$  provided that
$$
\lambda <e^{hC^-/2} \lambda_0,
$$
where $\lambda_0$ is given in (\ref{la}). This sufficient condition is consistent with the one-specie case $C^-=0$.  For the free energy dissipation property,
we consider the semi-discrete functional of the form
$$
 F(t) =\sum_{i=1}^m \sum_{j=1}^N  \left( c_j^i \ln c_j^i +\frac{1}{2}q_ic_j^i \psi_j\right)h +  \frac{1}{2}\sigma_a\psi_1+ \frac{1}{2} \sigma_b\psi_N.
 $$
Following the calculation in \eqref{entropy} for each $c^i_j$, and \eqref{potential} using (\ref{pc+}) for the potential part, we arrive at
 \begin{equation}\label{fd+}
\frac{d}{dt} F=-\frac{1}{h}\sum_{i=1}^m \sum_{j=1}^{N-1}e^{-q_i(\psi_{j+1}+\psi_j)/2}\left(\ln g^i_{j+1}-\ln g^i_j\right)(g^i_{j+1}-g^i_j)\leq 0,
\end{equation}
where $ g_j^i=c_j^ie^{q_i\psi_j}$.
\end{remark}

\subsection{On solving the Poisson equation} \label{onPE}
One of the main numerical tasks is to solve the Poisson equation
$$ \Delta \psi= -c$$
in the first step of our algorithm, subject to the Neumann conditions. In one dimensional case,  we use the numerical boundary condition $\psi_1-\psi_0=-\sigma_a h$ and $\psi_{N+1}-\psi_{N}=\sigma_b h$ to construct the tridiagonal matrix $A$ and the right hand side source term $u$
\begin{center}
\begin{tabular}{cc}
$A=\left[\begin{tabular}{rrrrrrr}
-1 & 1 &  &  &  & & \\
1 & -2 & 1 &  &  & & \\
 & 1 & -2 & 1 &  &  &\\
 & &  & $\cdots$ &  &  &  \\
 &  &  &  1&  -2&1  \\
 &  &  &  &  1& -2 & 1\\
 &  &  &  &  & 1 & -1
\end{tabular}\right], $&
$u=\left[\begin{tabular}{c}
$-c_1h^2-\sigma_a h$ \\
$-c_2h^2$ \\
 $\cdot$\\
 $\cdot$  \\
   $\cdot$\\
 $-c_{N-1}h^2$\\
 $-c_Nh^2-\sigma_b h$
\end{tabular}\right],$
\end{tabular}\\
\end{center}
so that the corresponding linear system becomes $A\psi=u$ with $\psi=[\psi_1, \cdots, \psi_N]^\top$.
 However, the above defined $A$ is singular since the solution to the Poisson equation with Neumann boundary conditions is  unique up to an additive constant. We pick one definite solution by setting $\psi_1=0$, i.e., we set
\begin{center}
\begin{tabular}{cc}
$A=\left[\begin{tabular}{rrrrrrr}
1 &  &  &  &  & & \\
1 & -2 & 1 &  &  & & \\
 & 1 & -2 & 1 &  &  &\\
 & &  & $\cdots$ &  &  &  \\
 &  &  &  1&  -2&1  \\
 &  &  &  &  1& -2 & 1\\
 &  &  &  &  & 1 & -1
\end{tabular}\right]$&
$u=\left[\begin{tabular}{c}
$0$ \\
$-c_2h^2$ \\
 $\cdot$\\
 $\cdot$  \\
   $\cdot$\\
 $-c_{N-1}h^2$\\
 $-c_Nh^2-\sigma_b h$
\end{tabular}\right].$
\end{tabular}\\
\end{center}
In this new formulation, the surface charge $\sigma_a$ is implicitly used because of the compatible condition
 $$
 - \sum_{i=1}^Nc_i h =\sigma_a+\sigma_b,
 $$
which can be seen from  the sum of (\ref{pc}) over $j=1,\cdots, N$. In the old formulation, the first equation gives
$\psi_2=-c_1h^2 -\sigma_a h$ after we set $\psi_1=0$. In the new formulation, the same $\psi_2$ can be obtained
by summing all equations except the first one:
$$
-\psi_2 =-\sum_{i=2}^Nc_ih^2 -\sigma_b h =c_1h^2 +\sigma_a h.
$$

In two dimensions of a rectangle domain $[a,b]\times[a',b']$ with a uniform partition of $x_i=a+h(i-1/2),\quad y_j=a'+h(j-1/2),\quad i=1,\cdots,N_x$ and $j=1,\cdots,N_y$,  we first arrange $\psi_{i,j}$ and $c_{i,j}$ into an array row by row, i.e.,
$$\tilde{\psi}=[\psi_{1,1},\cdots,\psi_{1,N_y},\cdots,\psi_{N_x,1},\cdots,\psi_{N_x,N_y}]^{\rm T},$$
$$\tilde{c}=[c_{1,1},\cdots,c_{1,N_y},\cdots,c_{N_x,1},\cdots,c_{N_x,N_y}]^{\rm T},$$
where $\psi_{i,j}$ and $c_{i,j}$ approximate  $\psi(x_i,y_j)$ and $c(x_i,y_j)$,  respectively.

With the Neumann boundary condition $\frac{\partial \psi}{\partial \bf n}=\sigma$, $\tilde{\psi}_{1}=0$ and in a square domain($N_x=N_y=N$), we solve the linear system $A\psi=u$, where
{ \begin{center}
\begin{tabular}{ll}
${\rm A}=\left[\begin{tabular}{lllll}
${\rm B}_1$ & I &  &  &   \\
I & ${\rm B}_2$ & I &  &   \\
 & &   $\cdots$ &&    \\
  &  &  I& ${\rm B}_2$ & I \\
  &  &  & I & ${\rm B}_3$
\end{tabular}\right]$ &

${\rm B}_1=\left[\begin{tabular}{rrrrr}
1&   &  &  &   \\
  1& -3 & 1 &  &   \\
&  & $\cdots$ &  &   \\
 &  &  1& -3 & 1\\
  &  &  & 1 & -2
\end{tabular}\right]$ \\
\noalign{\vskip 2mm}
${\rm B}_2=\left[\begin{tabular}{rrrrr}
-3& 1  &  &  &   \\
  1& -4 & 1 &  &   \\
&  & $\cdots$ &  &   \\
  &  &  1& -4 & 1\\
  &  &  & 1 & -3
\end{tabular}\right]$&
${\rm B}_3=\left[\begin{tabular}{rrrrr}
-2& 1  &  &  &   \\
  1& -3 & 1 &  &   \\
&  & $\cdots$ &  &   \\
  &  &  1& -3 & 1\\
  &  &  & 1 & -2
\end{tabular}\right],$

\end{tabular}\\
\end{center}
}
and
\begin{align*}u=[&0,-c_{1,2}h^2-\sigma h,\cdots,-c_{1,N-1}h^2-\sigma h,-c_{1,N}h^2-2\sigma h,\\
                             &-c_{2,1}h^2-\sigma h,-c_{2,2}h^2,\cdots,-c_{2,N-1}h^2,-c_{2,N}h^2-\sigma h,\\
                             &\qquad\qquad\qquad\qquad\qquad\cdots\\
                              &-c_{N-1,1}h^2-\sigma h,-c_{N-1,2}h^2,\cdots,-c_{N-1,N-1}h^2,-c_{N-1,N}h^2-\sigma h,\\
&c_{N,1}h^2-2\sigma h, -c_{N,2}h^2-\sigma h, \cdots,c_{N,N-1}h^2-\sigma h,c_{N,N}h^2-2\sigma h]^{\rm T}.
\end{align*}
Note that if $N_x\neq N_y$, the matrices $A$, $B_1$, $B_2$, $B_3$ and $I$ are no longer squares,  and need to be modified accordingly. Note that the matrix $A$ is a constant matrix in our setting.

\section{Numerical examples}
We now test our method in various settings.
\subsection{One dimensional numerical tests}
\subsubsection{Single species}
We first consider a neutral system with $\sigma_a=-1$, $\sigma_b=0$, and $\int_a^b c^{\rm in}(x)dx=1$, which satisfies the compatibility condition \eqref{compatibility}.  The computational domain is $[0,1]$ and initial conditions are
\begin{align*}&(1) \quad c^{\rm in}(x)=1,\\
      &(2) \quad c^{\rm in}(x) =2-2x,\\
      & (3) \quad  c^{\rm in}(x) =  \begin{cases} 2, & \mbox{} 0.5\leq x\leq1 \\ 0, & \mbox{else. } \end{cases}
\end{align*}

Table \ref{1d1errortable} shows the numerical convergence at time $t=0.5$.  {The numerical solutions with $h=0.003125$ are taken to be  discrete reference solutions, and a piecewise cubic spline interpolation is used to obtain the reference solution $c_{ref}$ and $\psi_{ref}$. The $l^{\infty}$ error of the numerical solution $c_h$ with step size $h$ is then defined by $e_c(h)=\max_{x_i}|c_h-c_{ref}|$, where $x_i$ are the grid points. The convergence orders are defined by $\log_2 \frac{e_c(h)}{e_c(h/2)}$. The same definition applies to $\psi$.} We observe that the spacial accuracy of the numerical scheme is  of second order in both $c$ and $\psi$ for all three initial data,  including the discontinuous data.

Fig. \ref{1d1} shows the initials and final steady-state solutions, where we notice that solutions with all initials converge to the same steady-state solution given the same stop criteria, if the boundary data  $\sigma$ and the total density $\int_a^b c^{\rm in}(x)dx$ are the same. The initial $c^{\rm in}$ in (3) is set to be zero in $x<0.5$, the numerical solution will maintain positivity all the time and handle the discontinuity at $x=0.5$ well.   Fig. \ref{1d1F} shows the free energy decay in time, where the free energy $F$ is truncated in $[0.15, 0.2]$ for better comparisons. By adopting the same convergence criteria in the energy $F$, we observe that it takes the longest time for the initial data in (3)  to evolve into  the steady state. This is  as anticipated,   since  the initial in (3) is farther away  from the steady state than other initial data.  Eventually the free energy $F$ converges to $0.15375$ for all three cases.  Long time solution behavior is also tested,   {and our results show that the energy stays at  $0.15375$ even at $t=10$ for all  three cases. }

\begin{figure}[htb]
\caption{Different initials and their associated steady-state solutions}  \label{1d1}
 \includegraphics[height=0.301\textheight,width=\textwidth]{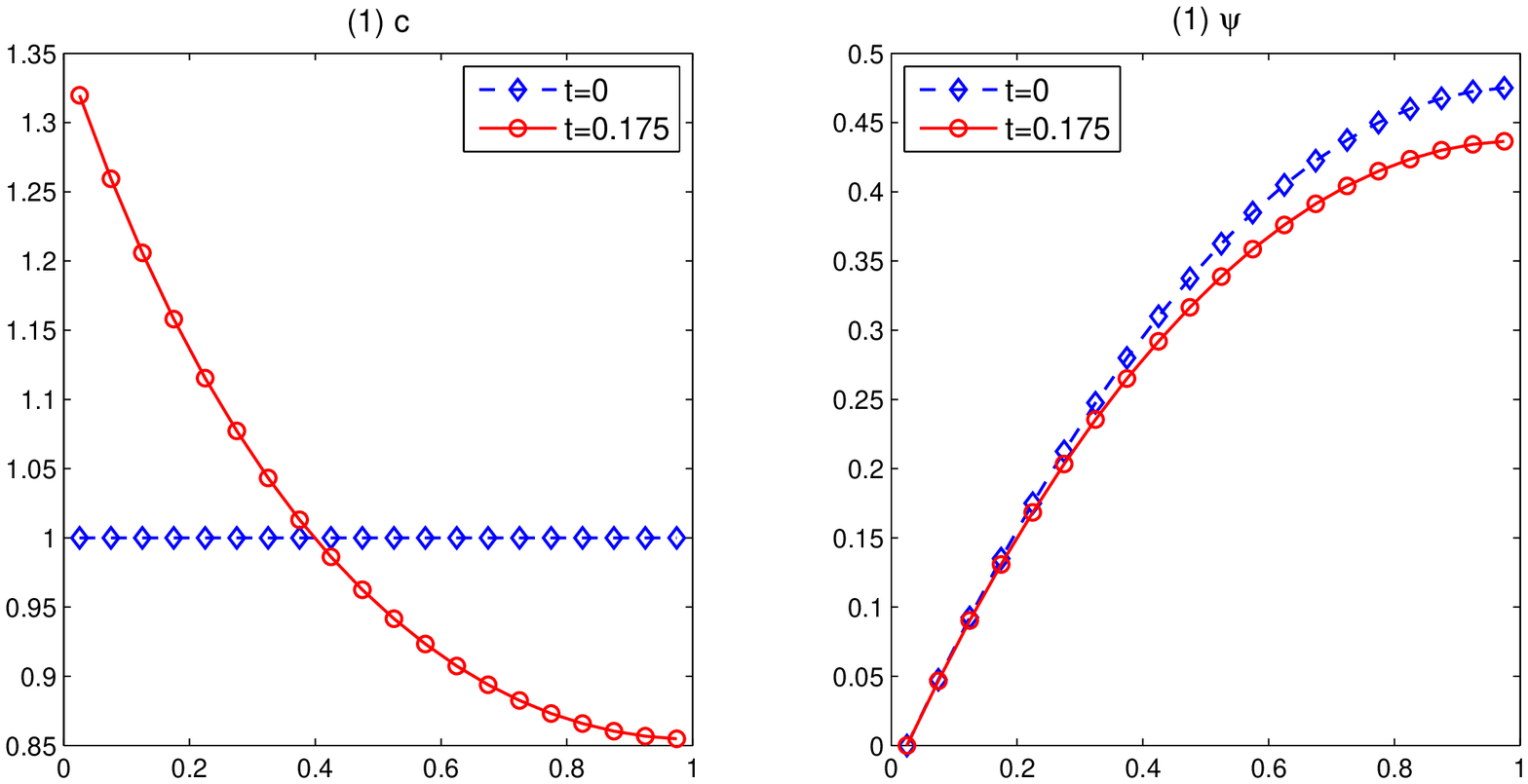}\\
 \includegraphics[height=0.301\textheight,width=\textwidth]{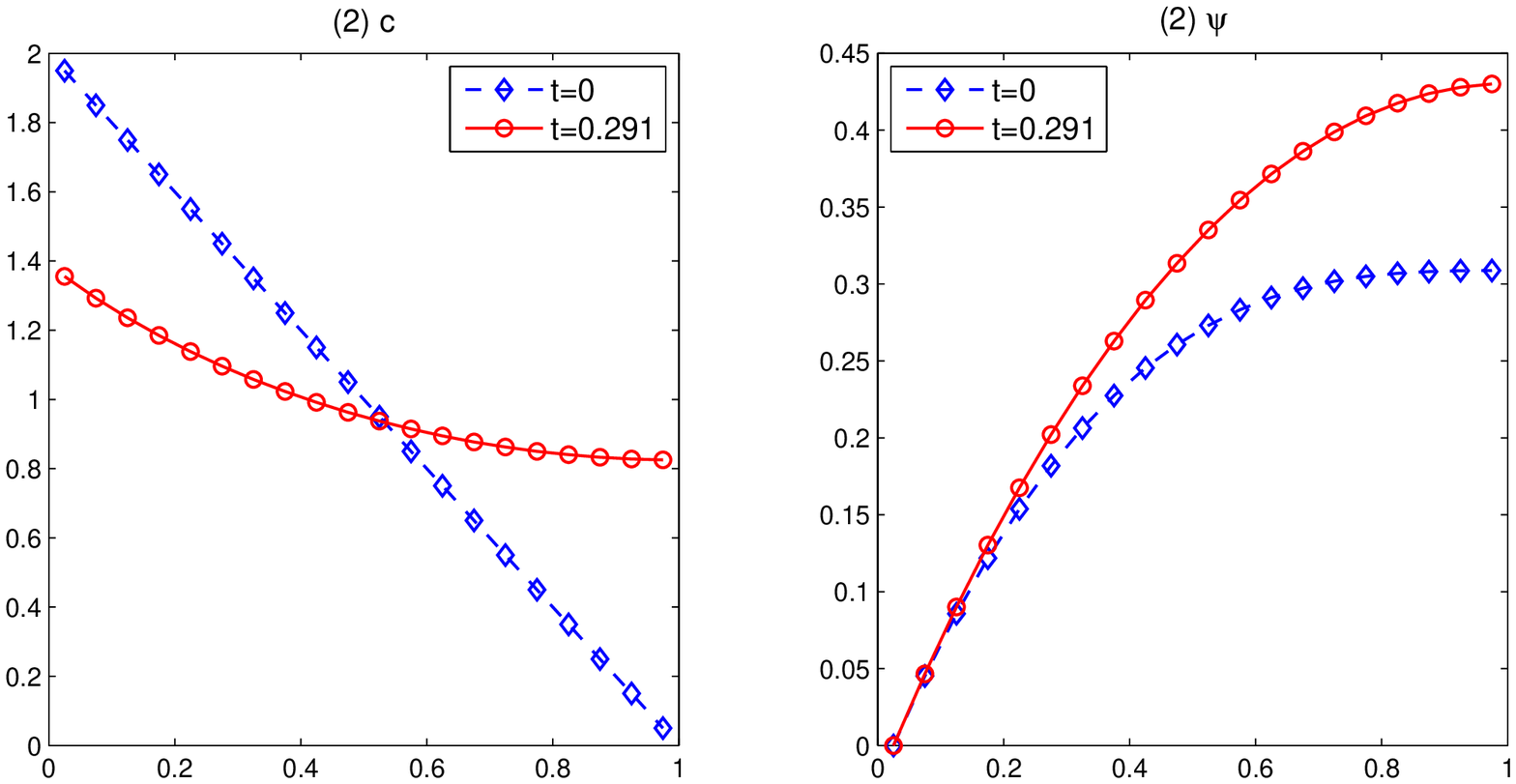}\\
 \includegraphics[height=0.301\textheight,width=\textwidth]{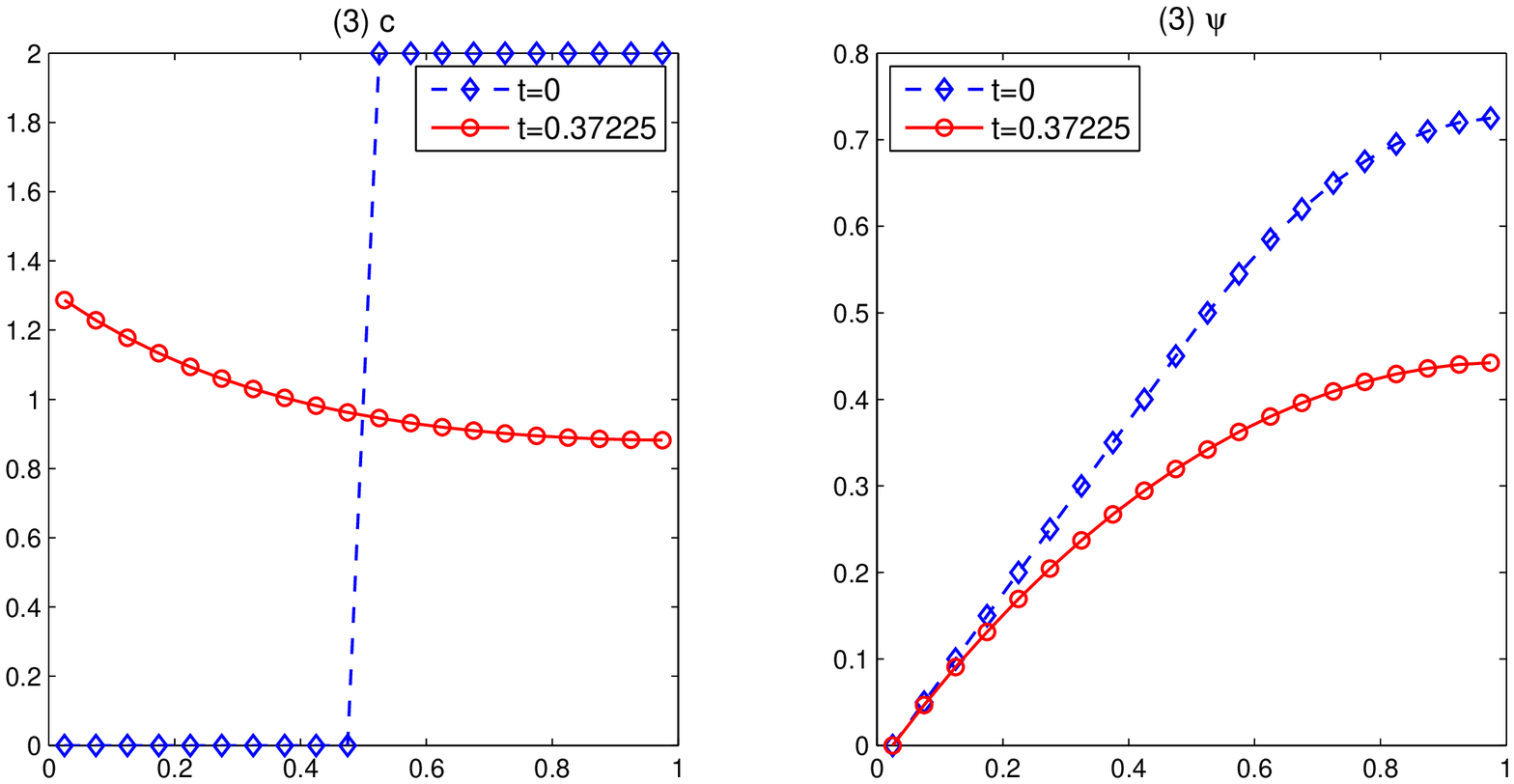}
\end{figure}

\begin{figure}[htb]
\caption{ Free energy decay in time}  \label{1d1F}
\includegraphics[height=0.4\textheight,width=\textwidth]{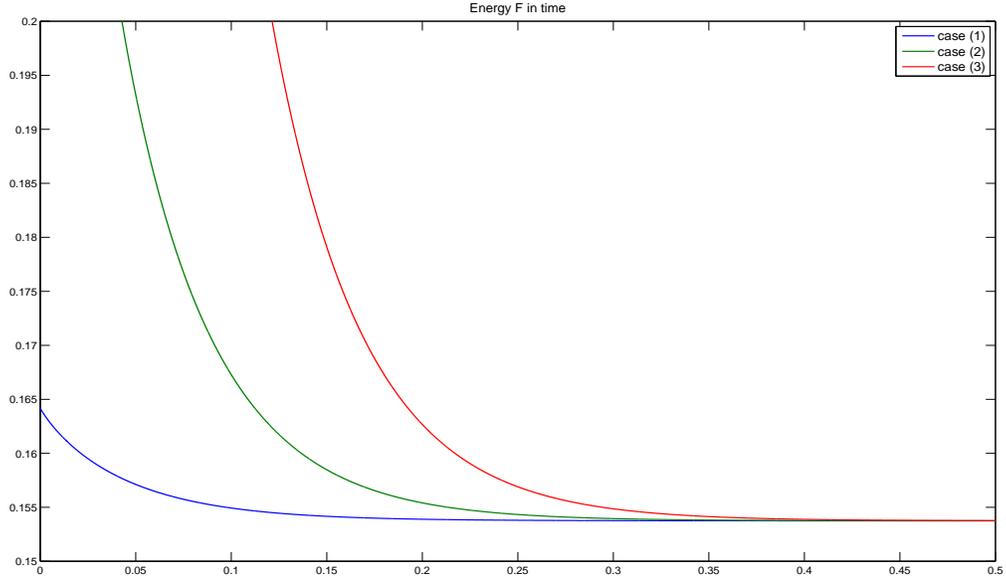}
\end{figure}

\begin{table}[htb]
\caption{$l^\infty$ error table of $c$ and $\psi$ for all initial cases at time $t=0.5$  }    \label{1d1errortable}
\begin{tabular}{|c|c|c|c|c|}
  \hline
h&       error in $c$ & order &error in $\psi$    &order           \\   \hline
          0.2&    0.0026321  &        - & 0.00041461   &        -   \\   \hline
          0.1&  0.00065636   &    2.0037&  0.00012431  &    1.7378  \\   \hline
         0.05&   0.0001639   &    2.0017& 3.4138e-005  &    1.8645   \\   \hline
        0.025& 4.2017e-005   &    1.9637& 9.1157e-006  &    1.9049   \\   \hline
       0.0125& 1.0026e-005   &    2.0672&  2.224e-006  &    2.0352   \\   \hline\hline

          0.2&    0.002514 &          -&  0.00030611&           -   \\   \hline
          0.1&  0.00060573 &     2.0532&   0.0001046&      1.5492    \\   \hline
         0.05&  0.00015138 &     2.0005& 2.9183e-005&      1.8416    \\   \hline
        0.025& 4.1626e-005 &     1.8627& 6.7804e-006&      2.1057     \\   \hline
       0.0125& 9.9647e-006 &     2.0626& 1.6558e-006&      2.0338    \\   \hline
      0.00625& 2.0458e-006 &     2.2842&  3.277e-007&      2.3371     \\   \hline \hline

          0.1&  0.00082461&           - & 0.00014438&          -    \\   \hline
         0.05&  0.00020753&      1.9904 &3.9146e-005&     1.8829    \\   \hline
        0.025&  6.347e-005&      1.7092 &1.2189e-005&     1.6833     \\   \hline
       0.0125& 1.5296e-005&       2.053 & 2.978e-006&     2.0331     \\   \hline
      0.00625& 3.2147e-006&      2.2504 &6.2711e-007&     2.2476     \\   \hline

\end{tabular}
\end{table}
\subsubsection{Multiple species}
We consider a neutral system with two species on $[a,b]$, e.g.,
\begin{align*}
& \partial_t c_i=\nabla \cdot(\nabla c_i+ q_ic_i\nabla \psi)\quad  \text{ in } \quad (a, b), \; i=1,2, \\
&\Delta \psi=- q_1c_1-q_2c_2  \quad \text{   in } (a,b) \\
&(\nabla c_i+q_ic_i\nabla\psi)\cdot {\bf n}|_{\Gamma}=0,\quad
\left.\frac{\partial \psi}{\partial {\bf n}}\right |_{a}=\sigma_a,\quad \left.\frac{\partial \psi}{\partial {\bf n}}\right |_{b}=\sigma_b.\notag
\end{align*}
In a domain [0,1], we pick $q_1=1$ and $q_2=-1$ and $\int_0^1 c_1(0,x)dx=2$, $\int_0^1 c_2(0,x)dx=1$, $\sigma_a=-1$ and $\sigma_b=0$ in order to satisfy compatibility condition (\ref{compatibility}). Initial conditions are
\begin{align*}&(1) \quad c_1(0,x)=2, \quad c_2(0,x)=1, \\
      &(2) \quad c_1(0,x)=4-4x,\quad c_2(0,x)=2x,\\
      & (3) \quad  c_1(0,x)=4x, \quad c_2(0,x)=2-2x.
\end{align*}
 Fig. \ref{1d2} shows the initial conditions and their steady-state solutions.  We also observe that in all cases the numerical solutions converge to the same steady-state solution  for the above fixed $\sigma$, $\int_0^1 c_1(0,x)dx$ and $\int_0^1 c_2(0,x)dx$, and the free free energy  $F$ converges to $1.5147$.
\begin{figure}[h]
\caption{Different initials and their associated steady-state solutions} \label{1d2}
 \includegraphics[width=1.05\textwidth]{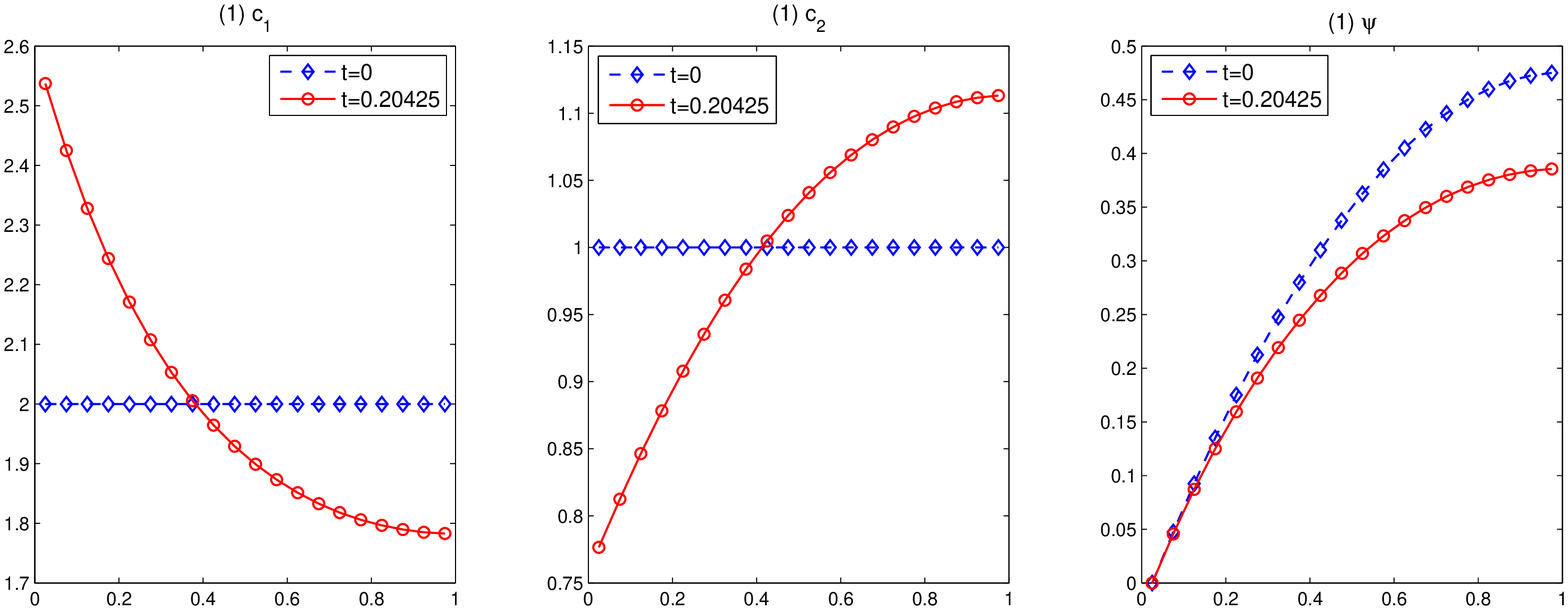}\\
 \includegraphics[width=1.05\textwidth]{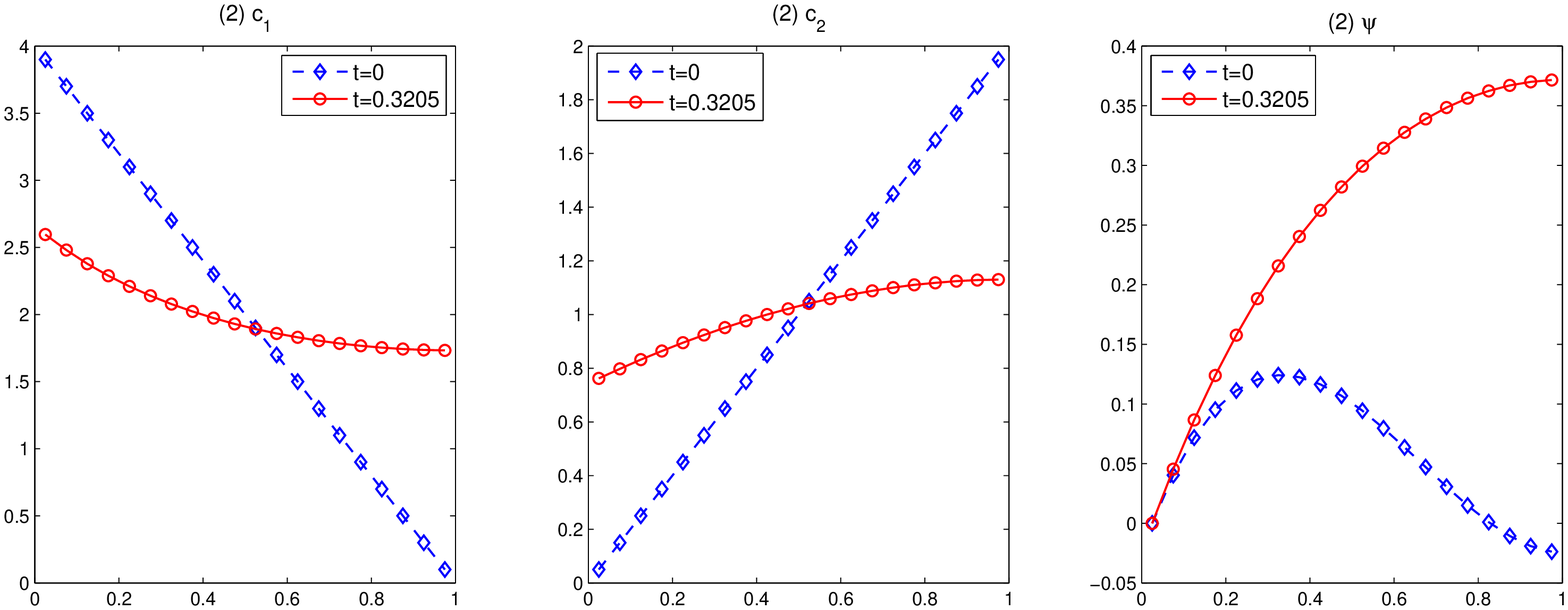}\\
 \includegraphics[width=1.05\textwidth]{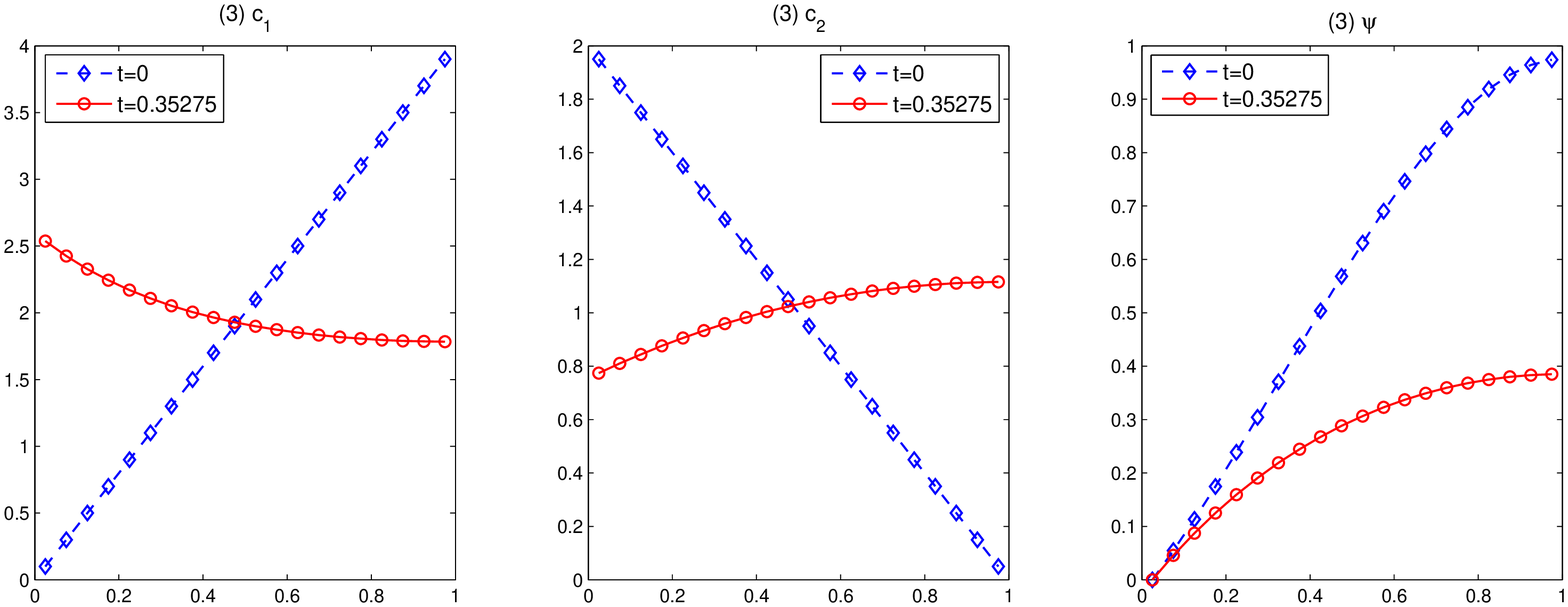}
\end{figure}
\\
\subsection{Two dimensional numerical tests}
\subsubsection{Single species}
We now test our algorithm in a two dimensional setting. Let $\Omega$ be the unit square centered at $(0.5,0.5)$. The initial and boundary conditions are
\begin{align*}
&(1) \quad c(0,x)=4,\quad \left.\frac{\partial \psi}{\partial {\bf n}}\right|_\Gamma=-1; \\
&(2) \quad c(0,x)=2,\quad \left.\frac{\partial \psi}{\partial {\bf n}}\right|_\Gamma= \begin{cases}
             -1 & \text{}  {\{(x,y) |x=1\text{ or }  y=0\}} \\
             0  & \text{else.}
       \end{cases}
\end{align*}
Note that the compatibility condition (\ref{compatibility}) is satisfied for both cases.

Table \ref{2d1errortable} shows the numerical error and the convergence order for the initial case (1), where a reference solution is computed at $h=0.0125$. We observe the convergence rate at order two for $c$ and slightly worse for $\psi$.  Fig. \ref{2d1} shows the steady-state solution of  different initial conditions. For the  case 1, we also display the evolution of the concentration $c$ (from bottom to top as time evolves). The evolution of the potential is not shown since it does not change too much visually.

\begin{table}[htb]
\caption{$l^\infty$ error table of $c$ and $\psi$ for 2D initial case $(1)$ at time $t=0.05$  }   \label{2d1errortable}
\begin{tabular}{|c|c|c|c|c|} \hline
h\footnote{The same h is used for both x and y-directions.}&       error in $c$ & order &error in $\psi$    &order      \\   \hline
          0.2000  &   0.034449 &          - &   0.0005512   &         -   \\   \hline
          0.1000 &    0.009046 &      1.9291&   0.00019733  &      1.482  \\   \hline
         0.0500 &   0.0022608 &      2.0004 & 6.7967e-005   &    1.5377  \\   \hline
        0.0250 &  0.00053914  &     2.0681  & 1.867e-005    &   1.8641  \\   \hline
       0.0125  & 0.00010783  &     2.3219&  5.9918e-006     &  1.6397  \\   \hline
\end{tabular}
\end{table}

\begin{figure}[h]
\caption{ steady-state solutions for different 2D initial cases} \label{2d1}
 \includegraphics[width=0.45\textwidth]{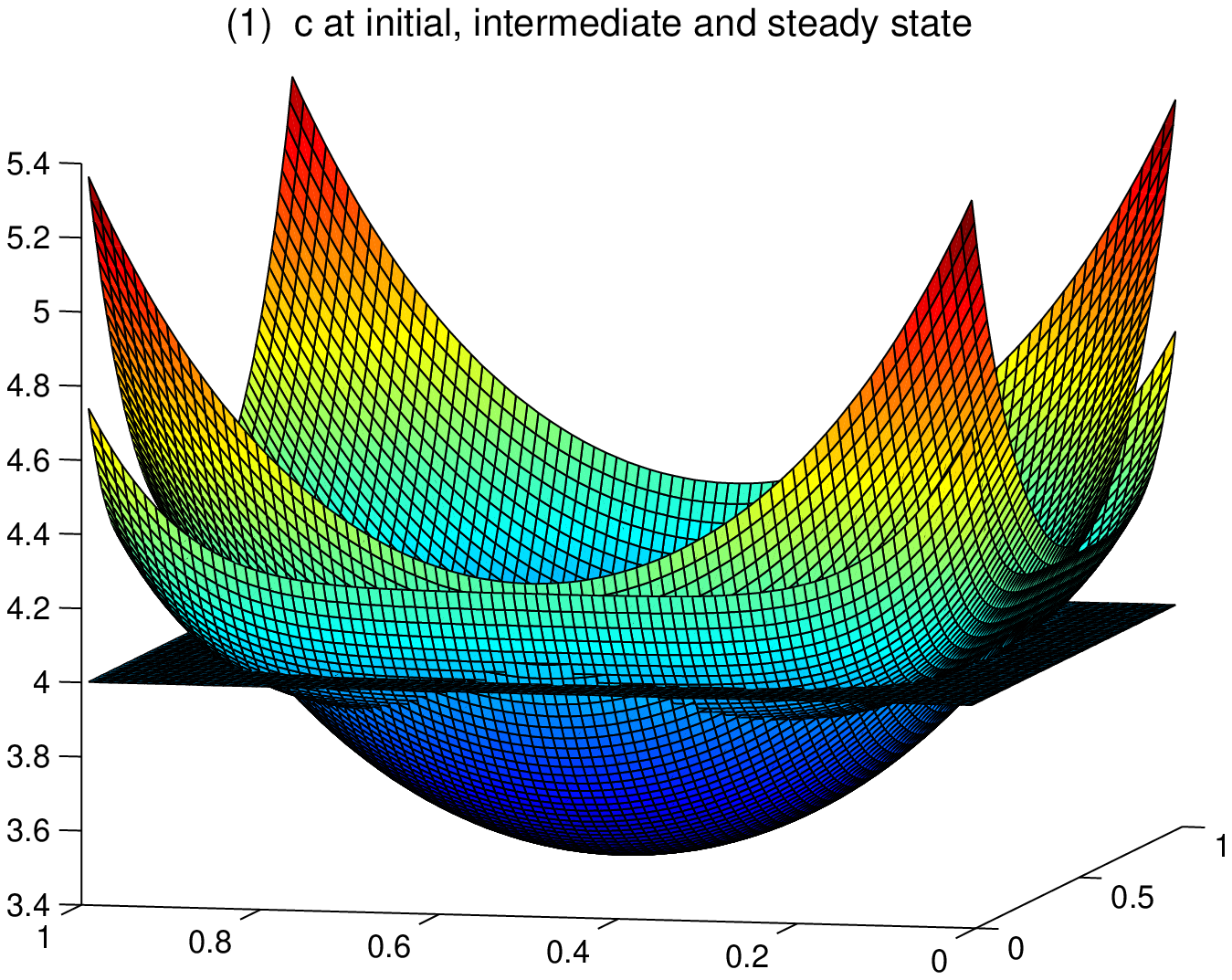}
 \includegraphics[width=0.45\textwidth]{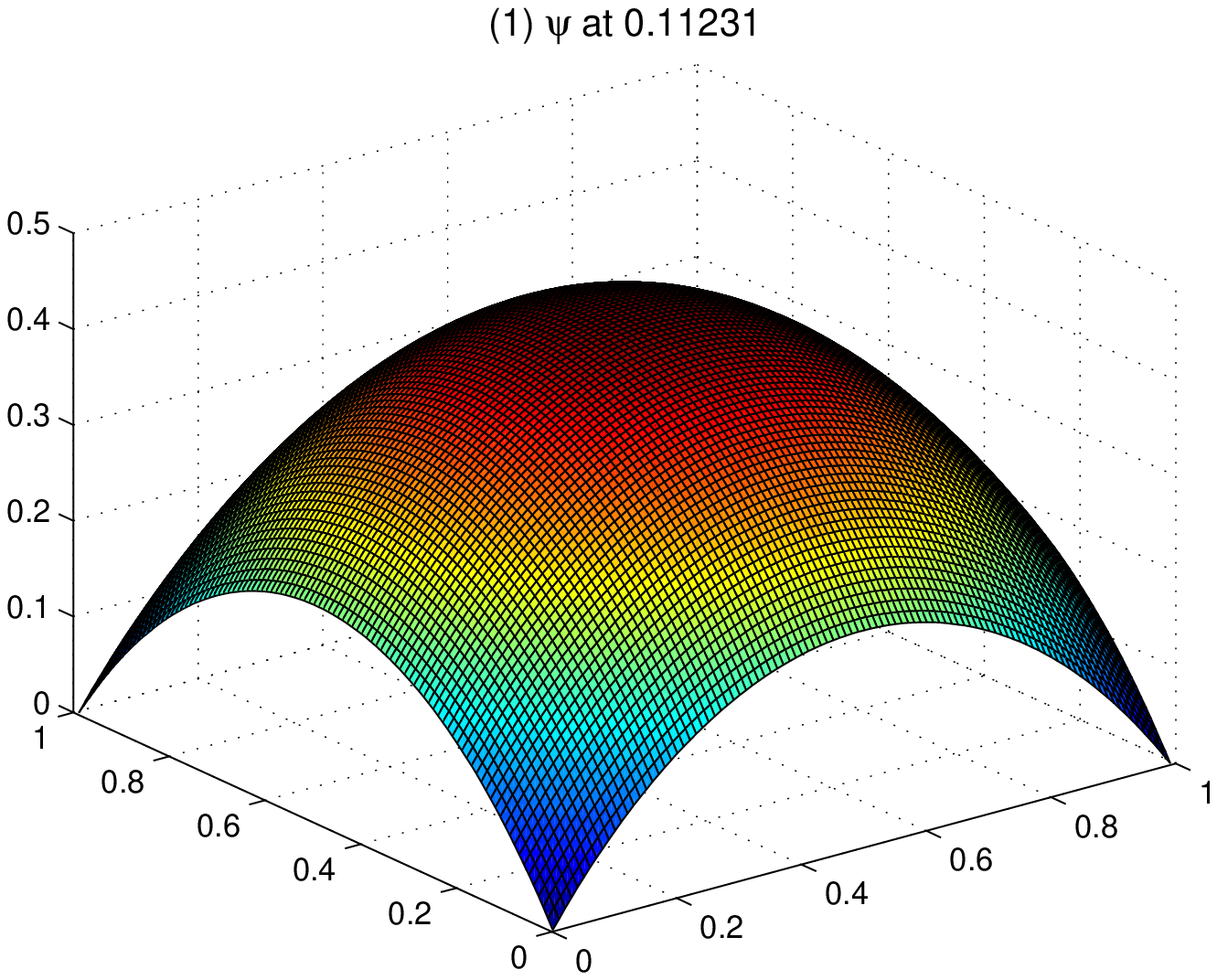}\\
  \includegraphics[width=0.45\textwidth]{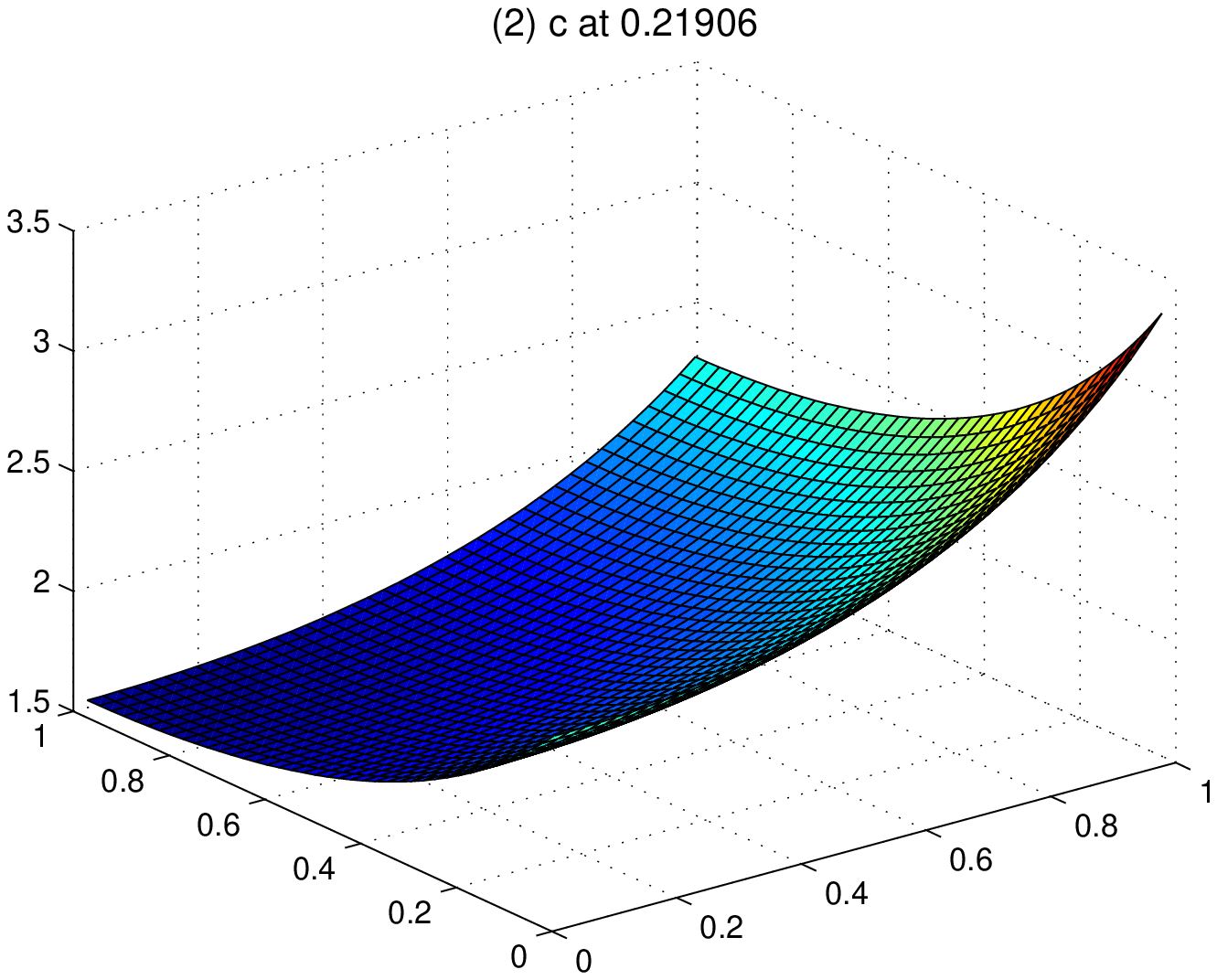} \fontsize{20}{12}\selectfont 
 \includegraphics[width=0.45\textwidth]{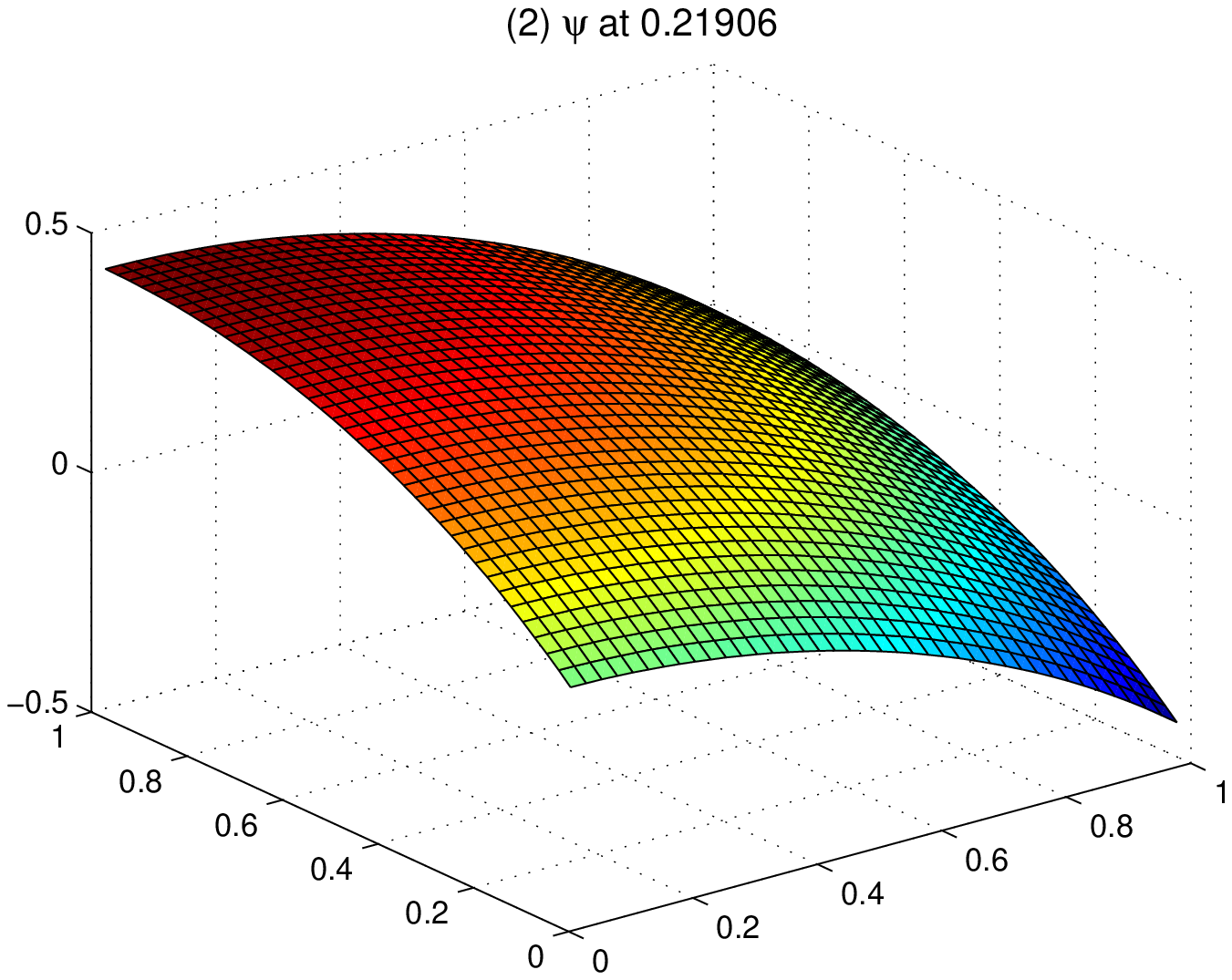}
\end{figure}

\section{Concluding remarks}
In this paper, we have investigated the Poisson-Nernst-Planck equation which is of mean field type model for concentrations of chemical species,  with our focus on the development of a free energy  satisfying numerical method for the PNP equation subject to zero flux for the chemical concentration and non-trivial flux for the potential on the boundary. We constructed simple, easy-to-implement conservative schemes which preserve equilibrium solutions, and proved that they satisfy all three desired properties of the chemical concentration: 
 the total concentrations is conserved exactly,  positivity of the chemical concentrations is preserved under a mild CFL condition $\Delta t/\Delta x <\lambda_0$,  and the free energy dissipation law is satisfied exactly at the semi-discrete level. The analysis is for one and multiple ionic species and for one-dimensional problems. But numerical tests are given for both one-dimensional and two-dimensional systems, as well as for multiple species.

Although this work makes good progress in constructing and analyzing an accurate method for solving the Poisson-Nernst-Planck equations numerically, there remain many challenges, which we wish to address in future work. 
First, we would like to extend the present numerical method and analytical results herein to a higher order discontinuous Galerkin method.
Second, for most ion channels, the appropriate boundary conditions are those with the non-zero flux across the boundary. In such settings, mass conservation is no longer valid,  we will investigate the possibility to preserve the solution positivity and  the energy dissipation property at the discrete level.  


\section*{Acknowledgments} The authors thank Bo Li for stimulating discussions of the PNP system.  Liu's research was partially supported by the National Science Foundation under grant DMS 13-12636. 

\bibliographystyle{plain}

\end{document}